\documentclass[12pt,twoside,a4paper]{article}

\usepackage{amsmath,amsthm,amssymb,tikz-cd,hyperref}

\newtheorem{theorem}{Theorem}[section]					
\newtheorem{lemma}[theorem]{Lemma}
\newtheorem{lemmadefn}[theorem]{Lemma/Definition}

\theoremstyle{definition}
\newtheorem{definition}[theorem]{Definition}
\theoremstyle{definition}
\newtheorem{example}[theorem]{Example}
\newtheorem{nothing}[theorem]{}

\newcommand{\N}{\mathbb{N}}								
\newcommand{\Z}{\mathbb{Z}}

\newcommand{\K}{\mathbb{K}}
							

\newcommand{\OO}{\mathcal{O}}



\newcommand{\Aut}{\operatorname{Aut}}
\newcommand{\BE}{\mathcal{BE}}
\newcommand{\Bl}{\operatorname{Bl}}
\newcommand{\bli}{\operatorname{bli}}
\newcommand{\BP}{\mathcal{BP}}

\newcommand{\dsum}{\oplus}

\newcommand{\gp}[1]{\langle#1\rangle}
\newcommand{\Hom}{\operatorname{Hom}}

\newcommand{\id}{\operatorname{id}}

\newcommand{\Inn}{\operatorname{Inn}}

\newcommand{\Irr}{\operatorname{Irr}}
\newcommand{\iso}{\cong}

\newcommand{\isoto}{\overset{\sim}{\to}}

\newcommand{\nor}{\trianglelefteq}

\newcommand{\onto}{\twoheadrightarrow}

\newcommand{\Out}{\operatorname{Out}}

\newcommand{\Res}{\operatorname{Res}}

\newcommand{\set}[1]{\left\{#1\right\}}

\newcommand{\subgp}{\leq}
\newcommand{\Syl}{\operatorname{Syl}}

\newcommand{\tensor}{\otimes}

	\makeatletter
	\newcommand{\tpitchfork}{%
		\vbox{
			\baselineskip\z@skip
			\lineskip-.52ex
			\lineskiplimit\maxdimen
			\m@th
			\ialign{##\crcr\hidewidth\smash{$-$}\hidewidth\crcr$\pitchfork$\crcr}
		}%
	}
	\makeatother

\title{Isotypic blocks of finite group algebras that are not $p$-permutation equivalent}
\author{John Revere McHugh\\john.r.mchugh@du.edu}

\begin{document}
	
\maketitle

\begin{flushleft}
	Department of Mathematics\\
	University of Denver\\
	C. M. Knudson Hall\\
	2390 S. York St, Denver, CO
\end{flushleft}
	
\begin{abstract}
	We show that Kessar's isotypy between Galois conjugate blocks of finite group algebras does not always lift to a $p$-permutation equivalence. We also provide examples of Galois conjugate blocks which are isotypic but not $p$-permutation equivalent. These results help to clarify the distinction between a $p$-permutation equivalence and an isotypy, and may be useful in determining necessary and sufficient conditions for when an isotypy lifts to a $p$-permutation equivalence.
\end{abstract}

\section{Introduction}

Let $G$ and $H$ be finite groups, let $p$ be a prime number, and let $(\K,\OO,k)$ be a $p$-modular system large enough for $G$ and $H$ such that $k$ is algebraically closed. If $A$ is a block of $\OO G$ and $B$ is a block of $\OO H$ then there are many different notions of ``equivalence'' between $A$ and $B$. For example, there might exist a splendid Rickard equivalence between $A$ and $B$ \cite{Rickard_1996}, a $p$-permutation equivalence \cite{Boltje_2020}, or an isotypy \cite{Broue_1990}. Brou\'{e}'s abelian defect group conjecture states that each of these three block equivalences should exist between any block with abelian defect groups and its Brauer correspondent.

Boltje and Perepelitsky showed in \cite[Section 15]{Boltje_2020} that a splendid Rickard equivalence between blocks $A$ and $B$ induces a $p$-permutation equivalence, and that a $p$-permutation equivalence induces an isotypy between $A$ and $B$ (the latter construction is outlined in \ref{noth:isofrompperm} below). However, the authors did not determine whether every isotypy between $A$ and $B$ ``comes from'' a $p$-permutation equivalence. The purpose of this note is to provide an example of an isotypy that does not come from a $p$-permutation equivalence. These results might be considered as a first step in the overall goal of determining necessary and sufficient conditions for when an isotypy lifts to a $p$-permutation equivalence. Of course, if an isotypy does not lift to a $p$-permutation equivalence then it cannot lift to a splendid Rickard equivalence, either. Examples of blocks that are isotypic but not derived equivalent --- and hence not splendid Rickard equivalent --- have already been identified (\cite{Benson_2007}, \cite{Kessar_2012}).

The isotypies we examine exist between ``Galois conjugate'' blocks of a fixed finite group $G$ and were first constructed by Kessar \cite{Kessar_2012}. We show in Theorem \ref{thm:two} that if $a$ and $b$ are Galois conjugate block idempotents then the isotypy of \cite{Kessar_2012} between $a$ and $b$ does not always lift to a $p$-permutation equivalence. This still leaves the possibility that a $p$-permutation equivalence between $a$ and $b$ exists (which would have to induce some other isotypy between $a$ and $b$ besides the one defined in \cite{Kessar_2012}). However, we also show in Theorem \ref{thm:three} that it is possible that no $p$-permutation equivalence between $a$ and $b$ exists. A method for constructing particular examples of this phenomenon is given in Example \ref{ex:one}. 

The key ingredient in the proofs of our results is the use of \textit{K\"{u}lshammer-Puig classes}, which are cohomological invariants associated to a given block. A $p$-permutation equivalence ``preserves'' K\"{u}lshammer-Puig classes \cite[Theorem 13.4]{Boltje_2020}, while an isotypy need not. The failure of an isotypy to preserve these cohomology classes thus leads to the failure to lift the isotypy to a $p$-permutation equivalence in certain instances.

\section{Preliminaries}

Let $G$ be a finite group.

\begin{nothing}\textit{Group theoretic notation.}
	We write $C_G(P)$, $N_G(P)$ and $Z(P)$ for the centralizer, normalizer and center, respectively, of a subgroup $P\subgp G$. The largest normal $p$-subgroup of $G$ is denoted $O_p(G)$. The group of inner automorphisms of $G$ is $\Inn(G)$, and $\Out(G)=\Aut(G)/\Inn(G)$. 
	
	If $g\in G$ then $c_g:G\to G$ is the conjugation map $c_g(x)=gxg^{-1}={}^gx$, $x\in G$. We also write ${}^gP$ for the image of $P$ under $c_g$. 
	
	If $H$ is another finite group, $Q\subgp H$, $P\subgp G$, and $\phi:Q\isoto P$ is a group isomorphism, then
	\begin{equation*}
		\Delta(P,\phi,Q)=\set{(\phi(y),y)|y\in Q}
	\end{equation*}
	is a \textit{twisted diagonal} subgroup of $G\times H$. If $P=Q$ and $\phi=\id_Q$ then we write $\Delta(Q)$ in place of $\Delta(P,\phi,Q)$. 
	
	If $g\in G$ then $g_p,g_{p'}\in G$ denote the $p$-part and $p'$-part of $g$, respectively. Let $G_{p'}$ denote the set of $p'$-elements of $G$.
\end{nothing}

\begin{nothing}\label{noth:pmodsys}$p$\textit{-modular systems, the Frobenius automorphism, and the antipode.}
	Throughout, $p$ is a prime number and $(\K,\OO,k)$ is a $p$-modular system such that $k$ is algebraically closed of characteristic $p$. When working with a finite group $G$ we also assume that $(\K,\OO,k)$ is ``large enough for $G$,'' which here means that $\K$ contains a primitive $|G|$th root of unity. 
	
	Let $R$ denote either of the rings $\OO$ or $k$. The image of an element $\lambda\in\OO$ under the canonical surjection $\OO\onto k$ will be denoted $\overline{\lambda}$. For $\lambda\in k$ we also set $\overline{\lambda}=\lambda$. The map $\overline{\cdot}:R\onto k$ just defined extends linearly to an $R$-algebra homomorphism $\overline{\cdot}:RG\onto kG$. Of course, if $R=k$ this map is just the identity automorphism of $kG$.
	
	Let $\sigma:k\isoto k$ denote the Frobenius automorphism of $k$, which is defined by $\sigma(\lambda)=\lambda^p$ for all $\lambda\in k$. Then $\sigma$ extends linearly to a ring automorphism of $kG$ which we also denote by $\sigma$. Note that $\sigma$ is not a $k$-algebra homomorphism of $kG$.
	
	The antipode of $RG$ is the $R$-module automorphism $(\cdot)^\ast:RG\to RG$ defined by $g^\ast=g^{-1}$ for any $g\in G$. This map is an anti-involution, i.e., satisfies $(\alpha^\ast)^\ast=\alpha$ and $(\alpha\beta)^\ast=\beta^\ast\alpha^\ast$ for all $\alpha,\beta\in RG$. The antipode of $\K G$ is similarly defined and has the same properties.
	
	Note that $\overline{\alpha}^\ast=\overline{\alpha^\ast}$ for all $\alpha\in RG$ and $\sigma(\beta)^\ast=\sigma(\beta^\ast)$ for all $\beta\in kG$.
\end{nothing}

\begin{nothing}\label{noth:blocks}\textit{Blocks.}
	The primitive idempotents of $Z(RG)$ are called the \textit{block idempotents} of $RG$ and the set of block idempotents is denoted $\bli(RG)$. If $b\in\bli(RG)$ then the ideal $RGb$ generated by $b$, which is itself an $R$-algebra with identity $b$, is a \textit{block algebra} of $RG$. The set of block algebras is denoted $\Bl(RG)$. Since the sets of block idempotents and block algebras of $RG$ are in natural $1$-to-$1$ correspondence, we may use the term \textit{block} to refer to either of these notions in what follows. 
	
	The maps $\overline{\cdot}:\OO G\onto kG$ and $(\cdot)^\ast:RG\to RG$ of \ref{noth:pmodsys} induce bijections
	\begin{align*}
		\bli(\OO G)	&\isoto\bli(kG)			&	\bli(RG)	&\isoto\bli(RG)\\
		b			&\mapsto\overline{b}	&	b			&\mapsto b^\ast
	\end{align*}
	Likewise the map $\sigma:kG\to kG$ restricts to a permutation of the blocks of $kG$. Via the first bijection above there is an induced permutation of the blocks of $\OO G$. By abuse of notation, in either case we write
	\begin{align*}
		\bli(RG)	&\isoto\bli(RG)\\
		b			&\mapsto\sigma(b)
	\end{align*}
	for the permutation induced by $\sigma$. Thus if $b\in\bli(\OO G)$ then $\sigma(b)$ is the unique block idempotent of $\OO G$ satisfying $\overline{\sigma(b)}=\sigma(\overline{b})$. 
	
	Blocks $a$ and $b$ of $RG$ are said to be \textit{Galois conjugate} if $\sigma^n(a)=b$ for some $n\in\N$. Note that if $a,b\in\bli(\OO G)$ then $a$ and $b$ are Galois conjugate if and only if $\overline{a}$ and $\overline{b}$ are Galois conjugate.
	
	Now if $H$ is another finite group and $(\K,\OO,k)$ is large enough for $G\times H$ then, after identifying the group algebra $R[G\times H]$ with $RG\tensor_R RH$ via $(g,h)\mapsto g\tensor h$, every block idempotent of $R[G\times H]$ is uniquely of the form $a\tensor b^\ast$ for some $a\in\bli(RG)$ and $b\in\bli(RH)$.
\end{nothing}

\begin{nothing}\label{noth:Brpairs}\textit{Brauer pairs and Brauer elements.}
	An $RG$\textit{-Brauer pair} is an ordered pair $(P,e)$ where $P$ is a $p$-subgroup of $G$ and $e\in\bli(RC_G(P))$. The group $G$ acts on the set $\BP_R(G)$ of $RG$-Brauer pairs via ${}^g(P,e)=({}^gP,{}^ge)$ for $g\in G$ and $(P,e)\in\BP_R(G)$. The stabilizer in $G$ of the Brauer pair $(P,e)$ is denoted $N_G(P,e)$. Note that $PC_G(P)\subgp N_G(P,e)\subgp N_G(P)$ for any $(P,e)\in\BP_R(G)$.
	
	The set of $RG$-Brauer pairs is partially ordered \cite[Definition 3.3]{Alperin_1979}, and the action of $G$ by conjugation respects ``inclusion'' of Brauer pairs. In other words, $\BP_R(G)$ is a $G$-poset. If $(P,e)\in\BP_R(G)$ and $Q\subgp P$ then there exists a unique block $f\in\bli(RC_G(Q))$ such that $(Q,f)\leq(P,e)$. 
	
	If $b\in\bli(RG)$ then $(P,e)$ is a $b$\textit{-Brauer pair} if $(\set{1},b)\leq(P,e)$. The set $\BP_R(G,b)$ of $b$-Brauer pairs is a subposet of $\BP_R(G)$ that is stable under the action of $G$. Every $RG$-Brauer pair is a $b$-Brauer pair for a unique block $b$ of $RG$ --- in other words, $\BP_R(G)$ is the disjoint union of the subsets $\BP_R(G,b)$, $b\in\bli(RG)$. Any two maximal $b$-Brauer pairs are $G$-conjugate, and if $(D,e)$ is a maximal $b$-Brauer pair then $D$ is a defect group of the block $B=RGb$. 
	
	One has an isomorphism of $G$-posets $\BP_\OO(G)\isoto\BP_k(G)$, $(P,e)\mapsto(P,\overline{e})$, which restricts to a $G$-poset isomorphism $\BP_\OO(G,b)\isoto\BP_k(G,\overline{b})$ for any block $b$ of $\OO G$.
	
	A \textit{Brauer element} of $RG$ is an ordered pair $(u,e)$ where $u$ is a $p$-element of $G$ and $e$ is a block idempotent of $RC_G(u)$. The set of Brauer elements of $RG$ is denoted $\BE_R(G)$. If $b$ is a block of $RG$ and $(u,e)\in\BE_R(G)$ then $(u,e)$ is a $b$\textit{-Brauer element} if $(\gp{u},e)$ is a $b$-Brauer pair. The set of $b$-Brauer elements is denoted $\BE_R(G,b)$.
\end{nothing}

\begin{nothing}\label{noth:fussys}\textit{Fusion systems.}
	Some of the results of this note require the language of \textit{fusion systems}. Rather than give the precise definition, here we will only recall two of the main situations in which these categories arise. Both types of fusion systems described below are examples of \textit{saturated} fusion systems. We refer the reader to \cite{Aschbacher_2011} for the abstract definition and the basic properties of fusion systems.
	
	Let $G$ be a finite group and let $D\in\Syl_p(G)$. Then the fusion system of $G$ over $D$ is the subcategory $\mathcal{F}_D(G)$ of the category of finite groups whose objects are the subgroups of $D$ and whose morphism sets $\Hom_{\mathcal{F}_D(G)}(P,Q)$, for $P,Q\subgp D$, consist of all group homomorphisms of the form $c_g:P\to Q$, $x\mapsto {}^gx$, where $g\in G$ is such that ${}^gP\subgp Q$.
	
	Let $G$ be a finite group, let $b\in\bli(RG)$, and let $(D,e_D)\in\BP_R(G,b)$ be a maximal $b$-Brauer pair. For each $P\subgp D$ let $e_P$ denote the unique block idempotent of $RC_G(P)$ such that $(P,e_P)\leq(D,e_D)$. The fusion system of $b$ associated to $(D,e_D)$ is the subcategory $\mathcal{F}=\mathcal{F}_{(D,e_D)}(G,b)$ of the category of finite groups whose objects are the subgroups of $D$ and with morphism sets $\Hom_{\mathcal{F}}(P,Q)$, for $P,Q\subgp D$, defined as the set of group homomorphisms $\psi:P\to Q$ for which there exists an element $g\in G$ satisfying $\psi=c_g$ and ${}^g(P,e_P)\leq(Q,e_Q)$. Note that if $b\in\bli(\OO G)$ then $\mathcal{F}_{(D,e_D)}(G,b)=\mathcal{F}_{(D,\overline{e_D})}(G,\overline{b})$.
	
	Let $\mathcal{F}$ be a fusion system over a finite $p$-group $D$ and let $P\subgp D$. Then $P$ is $\mathcal{F}$\textit{-centric} (respectively, \textit{fully} $\mathcal{F}$\textit{-centralized}) if $C_D(Q)=Z(Q)$ (resp., if $|C_D(P)|\geq|C_D(Q)|$) for all $Q\subgp D$ that are $\mathcal{F}$-isomorphic to $P$. The subgroup $P$ is \textit{normal} in $\mathcal{F}$ (written $P\nor\mathcal{F}$) if $P\nor D$ and for all $U,V\subgp D$ and all $\varphi\in\Hom_{\mathcal{F}}(U,V)$ there exists a morphism $\overline{\varphi}\in\Hom_{\mathcal{F}}(PU,PV)$ extending $\varphi$ and satisfying $\overline{\varphi}(P)=P$. There is a unique maximal normal subgroup in $\mathcal{F}$ which is denoted $O_p(\mathcal{F})$.
	
	For any subgroup $P\subgp D$ set $\Out_{\mathcal{F}}(P)=\Aut_{\mathcal{F}}(P)/\Inn(P)$. Note that if $\mathcal{F}=\mathcal{F}_D(G)$ for some finite group $G$ such that $D\in\Syl_p(G)$ then $\Out_{\mathcal{F}}(P)\iso N_G(P)/PC_G(P)$, and if $\mathcal{F}=\mathcal{F}_{(D,e_D)}(G,b)$ for some block $b$ of $RG$ then $\Out_{\mathcal{F}}(P)\iso N_G(P,e_P)/PC_G(P)$.
	
	If $\mathcal{F}'$ is another fusion system over a finite $p$-group $E$, then a group isomorphism $\phi:E\isoto D$ induces an isomorphism of fusion systems $\phi:\mathcal{F}'\isoto\mathcal{F}$ if
	\begin{equation*}
		\Hom_{\mathcal{F}}(\phi(P),\phi(Q))=\phi\circ\Hom_{\mathcal{F}'}(P,Q)\circ\phi^{-1}
	\end{equation*}
	for all $P,Q\subgp E$.
\end{nothing}


\begin{lemmadefn}\label{lem:one}
	(\cite{Kessar_2012}) The map $\BP_k(G)\to\BP_k(G)$, $(P,e)\mapsto(P,\sigma(e))$, is a $G$-poset automorphism that restricts to a $G$-poset isomorphism
	\begin{equation*}
		\BP_k(G,b)\isoto\BP_k(G,\sigma(b))
	\end{equation*}
	for any $b\in\bli(kG)$. In particular, if $b\in\bli(kG)$ and $(D,e_D)$ is a maximal $b$-Brauer pair then $(D,\sigma(e_D))$ is a maximal $\sigma(b)$-Brauer pair and
	\begin{equation*}
		\mathcal{F}_{(D,e_D)}(G,b)=\mathcal{F}_{(D,\sigma(e_D))}(G,\sigma(b)).
	\end{equation*}
	By the remarks of \ref{noth:Brpairs} we obtain a $G$-poset automorphism of $\BP_\OO(G)$, denoted abusively by $(P,e)\mapsto(P,\sigma(e))$, where if $(P,e)$ is an $\OO G$-Brauer pair then $\sigma(e)$ is the unique block idempotent of $\OO C_G(P)$ such that $\overline{\sigma(e)}=\sigma(\overline{e})$. Moreover this map restricts to a $G$-poset isomorphism $\BP_\OO(G,b)\isoto\BP_\OO(G,\sigma(b))$ for any $b\in\bli(\OO G)$.
\end{lemmadefn}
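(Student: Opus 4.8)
The plan is to check the three assertions in turn, the crucial point throughout being that $\sigma$ commutes with conjugation and with the Brauer homomorphism because it fixes every group element. First, since $\sigma$ acts only on coefficients, it restricts to a \emph{ring} automorphism of the subalgebra $kC_G(P)$ for each $p$-subgroup $P\subgp G$, and a ring automorphism permutes $Z(kC_G(P))$, hence $\bli(kC_G(P))$, so that $(P,\sigma(e))$ is again a $kG$-Brauer pair. For $g\in G$ the operators $\sigma$ and $c_g$ on $kG$ commute --- both are determined on the group basis, where one permutes basis elements and the other raises the coefficient to the $p$th power --- so ${}^g\sigma(e)=\sigma({}^ge)$ and therefore ${}^g(P,\sigma(e))=\sigma(\,{}^g(P,e)\,)$; this is the $G$-equivariance.

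Next, to see that the map preserves $\leq$, recall that the order on $\BP_k(G)$ is generated by the normal-inclusion relation $\trianglelefteq$ of \cite[Definition 3.3]{Alperin_1979}, and that for $p$-subgroups $Q\trianglelefteq P$ of $G$ one has $(Q,f)\trianglelefteq(P,e)$ precisely when $f$ is $P$-stable and $\mathrm{Br}_P(f)\,e=e$, where $\mathrm{Br}_P\colon(kC_G(Q))^P\to kC_G(P)$ is the Brauer homomorphism. The commutation of $\sigma$ with the $P$-action shows that $\sigma$ preserves $(kC_G(Q))^P$ and preserves $P$-stability, and $\sigma$ commutes with $\mathrm{Br}_P$ because that map is the projection onto the span of a subset of the group basis and $\sigma$ preserves supports. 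Applying the multiplicative map $\sigma$ to the equation $\mathrm{Br}_P(f)\,e=e$ then yields $\mathrm{Br}_P(\sigma(f))\,\sigma(e)=\sigma(e)$, so $\sigma$ sends normal inclusions to normal inclusions and hence preserves $\leq$. Since $\bli(kC_G(P))$ is finite for every $P$, the inverse of the map is again of the same form, so it is a poset automorphism, and with the previous paragraph a $G$-poset automorphism. The block-wise restriction is then immediate: $(P,e)$ is a $b$-Brauer pair if and only if $(\set{1},b)\leq(P,e)$, if and only if $(\set{1},\sigma(b))\leq(P,\sigma(e))$, if and only if $(P,\sigma(e))$ is a $\sigma(b)$-Brauer pair, so the automorphism carries $\BP_k(G,b)$ isomorphically onto $\BP_k(G,\sigma(b))$.

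For the remaining assertions: maximality is preserved by any poset isomorphism, so $(D,\sigma(e_D))$ is a maximal $\sigma(b)$-Brauer pair. For $P\subgp D$, applying $\sigma$ to $(P,e_P)\leq(D,e_D)$ and invoking uniqueness of the block sitting below a given Brauer pair with prescribed first component shows that the idempotent attached to $P$ in the fusion system $\mathcal{F}_{(D,\sigma(e_D))}(G,\sigma(b))$ is exactly $\sigma(e_P)$. Hence, for $P,Q\subgp D$ and $g\in G$ with ${}^gP\subgp Q$, the condition ${}^g(P,e_P)\leq(Q,e_Q)$ making $c_g$ a morphism $P\to Q$ of $\mathcal{F}_{(D,e_D)}(G,b)$ is, after applying the order-preserving bijection $\sigma$ and using $G$-equivariance, equivalent to ${}^g(P,\sigma(e_P))\leq(Q,\sigma(e_Q))$, which is the condition making $c_g$ a morphism $P\to Q$ of $\mathcal{F}_{(D,\sigma(e_D))}(G,\sigma(b))$; as the two categories have the same objects, they are equal. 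The passage to $\OO$ is then formal transport of structure: $\sigma$ on $\bli(\OO C_G(P))$ is defined precisely so as to correspond to the $k$-version under the bijection $\bli(\OO C_G(P))\isoto\bli(kC_G(P))$, and $\overline{\sigma(b)}=\sigma(\overline{b})$, so the $G$-poset isomorphisms $\BP_\OO(G)\isoto\BP_k(G)$ and $\BP_\OO(G,b)\isoto\BP_k(G,\overline{b})$ of \ref{noth:Brpairs} intertwine the $\OO$- and $k$-versions of $(P,e)\mapsto(P,\sigma(e))$, and the $\OO$-statement follows.

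I expect the only real obstacle to be bookkeeping: recalling the precise form of the normal-inclusion relation --- in particular the $P$-stability clause and the fact that the relevant Brauer homomorphism here goes $(kC_G(Q))^P\to kC_G(P)$ --- so that the one-line identity $\mathrm{Br}_P(f)e=e \Rightarrow \mathrm{Br}_P(\sigma(f))\sigma(e)=\sigma(e)$ is airtight. Everything past that is routine transport of structure along a ring automorphism of $kG$ that fixes $G$ pointwise.
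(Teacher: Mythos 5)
The paper provides no proof of this statement; it is cited directly to \cite{Kessar_2012}. Your argument is correct and is the natural one: since $\sigma$ is a ring automorphism of $kG$ that fixes $G$ pointwise, it commutes with conjugation and with the Brauer homomorphism $\Br_P$, and from this the preservation of the normal-inclusion relation, the $G$-equivariance, the poset isomorphism, the preservation of maximality, the equality of fusion systems, and the formal transport to $\OO$ all follow exactly as you lay them out. The one step that needs care --- checking that $\sigma$ carries the condition that $f$ is $P$-stable and $\Br_P(f)e=e$ to the corresponding condition for $\sigma(f)$ and $\sigma(e)$ --- is handled correctly by your observations that $\sigma$ commutes with the $P$-action (so $\sigma(f)$ is $P$-stable) and that $\sigma$ commutes with $\Br_P$ (because $\Br_P$ is projection onto the group-basis elements supported on $C_G(P)$ and $\sigma$ preserves supports). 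Your derivation that the idempotent attached to $P$ below $(D,\sigma(e_D))$ is $\sigma(e_P)$, via uniqueness of the inclusion-maximal Brauer pair below a given one with prescribed first component, is also the right way to get the fusion-system equality. No gaps.
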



\begin{nothing}\label{noth:selfcent}\textit{Self-centralizing Brauer pairs and K\"{u}lshammer-Puig classes.}
	Let $b\in\bli(RG)$ and let $(P,e)\in\BP_R(G,b)$. Then $e$ is a block of $R[PC_G(P)]$, $(P,e)\in\BP_R(PC_G(P),e)$ and $(Z(P),e)\in\BP_R(C_G(P),e)$. The Brauer pair $(P,e)$ is called \textit{self-centralizing} if $(P,e)$ is maximal in $\BP_R(PC_G(P),e)$ or, equivalently, if $(Z(P),e)$ is maximal in $\BP_R(C_G(P),e)$. Additionally, if $(D,e_D)$ is a maximal $b$-Brauer pair such that $(P,e)\leq(D,e_D)$ then $(P,e)$ is self-centralizing if and only if $P$ is an $\mathcal{F}$-centric subgroup of $D$, where $\mathcal{F}=\mathcal{F}_{(D,e_D)}(G,b)$ \cite[Theorem IV.3.20]{Aschbacher_2011}.
	
	Let $(P,e)$ be a self-centralizing $RG$-Brauer pair. Set $I=N_G(P,e)$ and $\overline{I}=I/PC_G(P)$. By \cite[Lemma 5.8.12]{Nagao_1989} the block algebra $k[PC_G(P)]\overline{e}$ possesses a unique (absolutely) irreducible module $V$, which is necessarily $I$-stable. Therefore $V$ determines (cf. \cite[Theorem 3.5.7]{Nagao_1989}) a cohomology class $\kappa_{(P,e)}\in H^2(\overline{I},k^\times)$ called the \textit{K\"{u}lshammer-Puig class} at $(P,e)$.
	
	An alternate definition of the K\"{u}lshammer-Puig class is provided in \cite[Definition IV.5.31]{Aschbacher_2011}. We note that this definition is essentially equivalent to the one we have provided, except that the cohomology class which is produced in \cite{Aschbacher_2011} is the inverse of the class defined above.
\end{nothing}

\begin{lemma}\label{lem:two}
	Let $(P,e)\in\BP_R(G)$. Then $(P,e)$ is self-centralizing if and only if $(P,\sigma(e))$ is self-centralizing.
\end{lemma}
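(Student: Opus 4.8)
The plan is to exploit the fact that ``self-centralizing'' is, by the definition recalled in \ref{noth:selfcent}, a maximality condition inside a poset of Brauer pairs, and then to transport that condition along the poset isomorphism furnished by Lemma/Definition \ref{lem:one} --- applied not to $G$ itself but to the subgroup $C_G(P)$.

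Concretely, write $C=C_G(P)$. By \ref{noth:selfcent} we have $(Z(P),e)\in\BP_R(C,e)$, and $(P,e)$ is self-centralizing if and only if $(Z(P),e)$ is a maximal element of the poset $\BP_R(C,e)$; likewise, since $\sigma(e)\in\bli(RC_G(P))$ by Lemma/Definition \ref{lem:one}, the pair $(P,\sigma(e))$ is self-centralizing if and only if $(Z(P),\sigma(e))$ is a maximal element of $\BP_R(C,\sigma(e))$. Now I would apply Lemma/Definition \ref{lem:one} with $C$ playing the role of $G$ and $e$ playing the role of $b$: the assignment $(Q,f)\mapsto(Q,\sigma(f))$ is a poset automorphism of $\BP_R(C)$ which restricts to a poset isomorphism $\BP_R(C,e)\isoto\BP_R(C,\sigma(e))$ and which carries $(Z(P),e)$ to $(Z(P),\sigma(e))$. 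Since a poset isomorphism sends maximal elements to maximal elements and reflects them, $(Z(P),e)$ is maximal in $\BP_R(C,e)$ if and only if $(Z(P),\sigma(e))$ is maximal in $\BP_R(C,\sigma(e))$, which by the two reformulations above is precisely the assertion of the lemma.

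I do not anticipate a genuine obstacle here; the one point that warrants care is purely bookkeeping: one must confirm that the idempotent ``$\sigma(e)$'' produced by applying Lemma/Definition \ref{lem:one} to the group $C$ coincides with the ``$\sigma(e)$'' appearing in the statement of the present lemma. This holds because $Z(P)$ is central in $C=C_G(P)$, so $C_C(Z(P))=C_G(P)$, and because the operation $\sigma$ on block idempotents is defined intrinsically in terms of the ambient group; thus in both cases $\sigma(e)$ is the unique block idempotent $f$ of $RC_G(P)$ with $\overline{f}=\sigma(\overline{e})$. (One could equally run the same argument through the other characterization in \ref{noth:selfcent}, namely maximality of $(P,e)$ in $\BP_R(PC_G(P),e)$, applying Lemma/Definition \ref{lem:one} to the group $PC_G(P)$; then the poset isomorphism carries $(P,e)$ directly to $(P,\sigma(e))$.)
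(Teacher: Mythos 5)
Your proof is correct and follows essentially the same idea as the paper's one-line argument: apply Lemma/Definition \ref{lem:one} to a suitable subgroup of $G$ and transport the maximality condition along the resulting poset isomorphism. The paper uses the first characterization of self-centralizing (maximality of $(P,e)$ in $\BP_R(PC_G(P),e)$) and applies \ref{lem:one} to $PC_G(P)$, whereas you use the second characterization (maximality of $(Z(P),e)$ in $\BP_R(C_G(P),e)$) and apply \ref{lem:one} to $C_G(P)$ --- a superficial variant that you yourself already note in your closing parenthetical.
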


\begin{proof}
	By Lemma \ref{lem:one} the posets $\BP_R(PC_G(P),e)$ and $\BP_R(PC_G(P),\sigma(e))$ are isomorphic.
\end{proof}


\begin{lemma}\label{lem:forexample}
	(\cite[Proposition IV.5.35]{Aschbacher_2011}) Let $\mathcal{F}$ be a saturated fusion system over a finite $p$-group $D$ and suppose $P$ is an $\mathcal{F}$-centric subgroup of $D$ such that $P\nor\mathcal{F}$. Then for any $\kappa\in H^2(\Out_{\mathcal{F}}(P),k^\times)$ there exists a finite group $G$ containing $D$ and a block $b$ of $\OO G$ such that $(Q,b)$ is a $b$-Brauer pair for any $p$-subgroup $Q\subgp G$, $(D,b)$ is a maximal $b$-Brauer pair, $\mathcal{F}=\mathcal{F}_{(D,b)}(G,b)$ and the K\"{u}lshammer-Puig class at $(P,b)$ is $\kappa$.
\end{lemma}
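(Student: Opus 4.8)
The plan is to realize the prescribed data by a block of a group of the form $L \semil E$ where $L$ is chosen so that the Külshammer-Puig class is detected on the normal subgroup $P$, and $E$ is a complement that produces the fusion system $\mathcal F$ on the quotient. More precisely, the first step is to use the abstract realizability of saturated fusion systems: since $P \nor \mathcal F$ and $P$ is $\mathcal F$-centric, the quotient data $\Aut_{\mathcal F}(P) \onto \Out_{\mathcal F}(P)$ together with the action of $\mathcal F$ on $P$ gives rise to a finite group $X$ with $P \nor X$, $C_X(P) = Z(P)$, $D \in \Syl_p(X)$, and $X/P C_X(P) = X/P \cong \Out_{\mathcal F}(P)$ realizing the ``normal'' part of the fusion; one can take $X$ to be (a suitable model for) the fusion system $\mathcal F$ restricted via $P \nor \mathcal F$, or invoke that a fusion system with a normal centric subgroup is constrained and hence realized by a unique finite group up to the relevant equivalence. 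The point of this step is to reduce to the situation where the ambient fusion system is that of an honest finite group, so that what remains is to twist the block structure on $P$ by the cohomology class $\kappa$.

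The second step is the cohomological twisting. Given the class $\kappa \in H^2(\Out_{\mathcal F}(P), k^\times)$, lift it (abusing the fact that $\Out_{\mathcal F}(P) = X/PC_X(P)$) to a class in $H^2(X/C_X(P), k^\times)$, and use it to build a central extension $1 \to k^\times \to \widehat{X} \to X/C_X(P) \to 1$, or equivalently a twisted group algebra $k_\kappa[X/P]$. The idea is then to construct $G$ as a finite quotient: choose a finite cyclic subgroup $Z \subgp k^\times$ of order divisible enough to carry a representative cocycle, form the corresponding finite central extension $\widehat{X}_0$ of $X/C_X(P)$ by $Z$, and then pull back along $X \onto X/C_X(P)$ and extend by $P$ appropriately so that $P$ sits inside $G$ as a normal $p$-subgroup with $C_G(P) = Z(P) \times Z$ (after arranging $p \nmid |Z|$). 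Since $Z$ is a central $p'$-subgroup, the group algebra $\OO G$ decomposes along the characters of $Z$; the block $b$ is taken to be (a block lying over) the idempotent corresponding to a faithful character $\lambda: Z \isoto$ roots of unity in $\K$. One checks that $b$ has $D$ as a defect group, that every $p$-subgroup pair is a $b$-Brauer pair (because $O_p(G) \supseteq$ a subgroup through which all of $D$ centralizes the relevant data — essentially because $P \nor G$ forces the Brauer pairs to be ``rigid''), and that $\mathcal F_{(D,b)}(G,b) = \mathcal F_D(X) = \mathcal F$.

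The third step is to identify the Külshammer-Puig class of $b$ at $(P, b)$ with $\kappa$. Here one uses the definition from \ref{noth:selfcent}: $(P,b)$ is self-centralizing since $P$ is $\mathcal F$-centric, $N_G(P,b) = G$ (as $P \nor G$ and $b$ is the unique block over $\lambda$), so $\overline I = G/PC_G(P) \cong \Out_{\mathcal F}(P)$, and the unique irreducible module of $k[PC_G(P)]\overline b = k[P \times Z]\,\overline{b}$ is the one-dimensional module $k_\lambda$ twisted appropriately on $P$; its obstruction class to $\overline I$-equivariance is, by construction of the central extension, exactly the representative cocycle of $\kappa$. This is essentially the standard ``a block with central defect-normal $p$-subgroup is Morita equivalent to a twisted group algebra, and the twist is the KP class'' computation.

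The main obstacle I expect is the first step: producing a finite group $X$ that simultaneously (a) contains $D$ as a Sylow $p$-subgroup, (b) has $P$ normal with $C_X(P) = Z(P)$, and (c) realizes $\mathcal F$ on the nose as $\mathcal F_D(X)$. That $\mathcal F$ with $P \nor \mathcal F$ centric is \emph{constrained}, hence has a \emph{model} — a finite group $X$ with $O_p(X)$ centric and $\mathcal F = \mathcal F_D(X)$ — is a known theorem in the theory of fusion systems (the model theorem for constrained fusion systems); one then takes $P = O_p(X)$ after checking $P$ is indeed the relevant normal centric subgroup, which follows from $P \nor \mathcal F$ and $\mathcal F$-centricity. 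Granting that, the remaining steps are the routine (if slightly fiddly) bookkeeping of central $p'$-extensions and block idempotents, together with the well-documented computation of the Külshammer-Puig class in the normal-defect setting, so I would cite those rather than reprove them.
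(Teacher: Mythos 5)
The paper gives no proof of this lemma at all --- it is quoted verbatim from \cite[Proposition IV.5.35]{Aschbacher_2011} --- and your outline is essentially the argument behind that citation: the model theorem for constrained fusion systems yields a finite group $L$ with $P=O_p(L)$ centric, $D\in\Syl_p(L)$ and $\mathcal{F}_D(L)=\mathcal{F}$, and then $G$ is taken to be a central extension of $L$ by a cyclic $p'$-group carrying (a lift of) $\kappa$, with $b$ the block over a faithful character of the central subgroup, exactly as the paper itself indicates in the parenthetical remark of Example \ref{ex:one}. The points you defer --- that any model automatically satisfies $P\nor L$ and $C_L(P)=Z(P)$, that there is a unique block of $\OO G$ over the faithful central character so that $(Q,b)$ is a $b$-Brauer pair for every $p$-subgroup $Q$, and that the resulting twist is the K\"{u}lshammer--Puig class --- are precisely the content of the cited proposition, so citing them rather than reproving them is consistent with what the paper does.
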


\begin{nothing}\label{noth:clfuns}\textit{Class functions.}
	We write $CF(G;\K)$ for the $\K$-algebra of $\K$-valued class functions on $G$. The space of class functions is equipped with the usual inner product for which the set $\Irr_\K(G)$ of characters of irreducible $\K G$-modules is an orthonormal basis. We identify each class function on $G$ with its linear extension to $\K G$. Write $R_\K(G)$ for the $\Z$-span of $\Irr_\K(G)$ within $CF(G;\K)$. Let $CF(G;\OO)$ denote the $\OO$-subalgebra of $CF(G;\K)$ formed by the $\OO$-valued class functions and let $CF_{p'}(G;\K)$ denote the subspace of $CF(G;\K)$ consisting of class functions $\chi$ that satisfy $\chi(g)=0$ for all $g\in G-G_{p'}$, where $G_{p'}$ denotes the set of $p'$-elements of $G$.
	
	If $e$ is a central idempotent of $\OO G$ let $CF(G,e;\K)$ denote the subspace of class functions $\chi\in CF(G;\K)$ that satisfy $\chi(ge)=\chi(g)$ for all $g\in G$. The set $\Irr_\K(G,e)$ of characters of irreducible $\K Ge$-modules is a basis of $CF(G,e;\K)$. Let $R_\K(G,e)$ denote the subgroup of $R_\K(G)$ spanned by $\Irr_\K(G,e)$, let
	\begin{equation*}
		CF(G,e;\OO)=CF(G;\OO)\cap CF(G,e;\K)
	\end{equation*}
	and let
	\begin{equation*}
		CF_{p'}(G,e;\K)=CF_{p'}(G;\K)\cap CF(G,e;\K).
	\end{equation*}

	If $H\subgp G$, $g\in G$ and $\chi\in CF(H;\K)$ then ${}^g\chi\in CF({}^gH;\K)$ is the class function defined by ${}^g\chi({}^gh)=\chi(h)$ for all $h\in H$.
\end{nothing}

\begin{nothing}\label{noth:gendec}\textit{The generalized decomposition map.}
	If $(u,e)\in\BE_\OO(G)$ (see \ref{noth:Brpairs} above) then the generalized decomposition map associated to $(u,e)$ is the $\K$-linear map
	\begin{equation*}
		d_G^{u,e}:CF(G;\K)\to CF_{p'}(C_G(u),e;\K)
	\end{equation*}
	defined by
	\begin{equation*}
		d_G^{u,e}(\chi)(s)=\begin{cases}
			\chi(use)	&\text{if }s\in C_G(u)_{p'}\\
			0			&\text{if }s\notin C_G(u)_{p'}
		\end{cases}
	\end{equation*}
	for all $\chi\in CF(G;\K)$ and $s\in C_G(u)$.
\end{nothing}

\begin{nothing}\label{noth:isot}\textit{Isotypies.}
	Let $G$ and $H$ be finite groups and assume that $(\K,\OO,k)$ is large enough for $G$ and $H$. Let $A=\OO Ga\in\Bl(\OO G)$ and $B=\OO Hb\in\Bl(\OO H)$. Let $(D,e_D)\in\BP_\OO(G,a)$ and $(E,f_E)\in\BP_\OO(H,b)$ be maximal Brauer pairs, and for each $P\subgp D$ (respectively, $Q\subgp E$) let $e_P$ (resp. $f_Q$) denote the unique block idempotent of $\OO C_G(P)$ (resp. $\OO C_H(Q)$) such that $(P,e_P)\leq(D,e_D)$ (resp. $(Q,f_Q)\leq(E,f_E)$). Assume $\phi:E\isoto D$ is a group isomorphism that induces an isomorphism of fusion systems $\phi:\mathcal{F}_{(E,f_E)}(H,b)\isoto\mathcal{F}_{(D,e_D)}(G,a)$. 
	
	In this situation, an isotypy between $A$ and $B$ (relative to $(D,e_D)$, $(E,f_E)$ and $\phi:E\isoto D$) is a family of isometries
	\begin{equation*}
		I^Q:R_\K(C_H(Q),f_Q)\isoto R_\K(C_G(\phi(Q)),e_{\phi(Q)}),\qquad Q\subgp E
	\end{equation*}
	satisfying
	\begin{itemize}
		\item (Equivariance) If $Q\subgp E$ and $(g,h)\in G\times H$ is such that
		\begin{equation*}
			{}^{(g,h)}(\Delta(\phi(Q),\phi,Q),e_{\phi(Q)}\tensor f_Q^\ast)\leq(\Delta(D,\phi,E),e_D\tensor f_E^\ast)
		\end{equation*}
		in $\BP_\OO(G\times H,a\tensor b^\ast)$ then ${}^gI^Q(\psi)=I^{{}^hQ}({}^h\psi)$ for any $\psi\in R_\K(C_H(Q),f_Q)$.
		\item (Compatibility) For every $Q\subgp E$ and $v\in C_E(Q)$ the diagram
		\[\begin{tikzcd}
			{CF(C_H(Q),f_Q;\K)} &&& {CF(C_G(P),e_P;\K)} \\
			{CF(C_H(Q\gp{v}),f_{Q\gp{v}};\K)} &&& {CF(C_G(P\gp{u}),e_{P\gp{u}};\K)}
			\arrow["{I^Q}", from=1-1, to=1-4]
			\arrow["{d_{C_H(Q)}^{v,f_{Q\gp{v}}}}"', from=1-1, to=2-1]
			\arrow["{d_{C_G(P)}^{u,e_{P\gp{u}}}}", from=1-4, to=2-4]
			\arrow["{I^{Q\gp{v}}}"', from=2-1, to=2-4]
		\end{tikzcd}\]
		commutes, where $P=\phi(Q)$, $u=\phi(v)$, and where $I^Q$ and $I^{Q\gp{v}}$ denote (abusively) the $\K$-linear extensions of the isometries $I^Q$ and $I^{Q\gp{v}}$, respectively.
	\end{itemize}

	It can be shown (cf. \cite{Broue_1995}) that if $I^Q$, $Q\subgp E$, is an isotypy between $A$ and $B$ then each isometry $I^Q$ is \textit{perfect}, i.e., defines a $\K$-isomorphism
	\begin{equation*}
		I^Q:CF_{p'}(C_H(Q),f_Q;\K)\isoto CF_{p'}(C_G(\phi(Q)),e_{\phi(Q)};\K)
	\end{equation*}
	and an $\OO$-isomorphism
	\begin{equation*}
		I^Q:CF(C_H(Q),f_Q;\OO)\isoto CF(C_G(\phi(Q)),e_{\phi(Q)};\OO).
	\end{equation*}
\end{nothing}

The next theorem is the main result of \cite{Kessar_2012}. Since the definition of an isotypy between blocks used here is stronger than that of \cite{Kessar_2012} --- in the sense that we require an isometry for each subgroup of the defect group, rather than each \textit{cyclic} subgroup --- in the subsequent proof we provide some information on how Kessar's result is easily extended to our setting.

\begin{theorem}\label{thm:one}
	(\cite[Theorem 1.1]{Kessar_2012}) Let $b\in\bli(\OO G)$, let $(D,e_D)\in\BP_\OO(G,b)$ be a maximal $b$-Brauer pair and for each $P\subgp D$ let $e_P$ denote the unique block idempotent of $\OO C_G(P)$ such that $(P,e_P)\leq(D,e_D)$. By Lemma \ref{lem:one}, $(D,\sigma(e_D))$ is a maximal $\sigma(b)$-Brauer pair and if $P\subgp D$ then $(P,\sigma(e_P))\leq(D,\sigma(e_D))$. For each $P\subgp D$ the map
	\begin{equation*}
		I^P:R_\K(C_G(P),e_P)\to R_\K(C_G(P),\sigma(e_P))
	\end{equation*}
	defined by $I^P(\chi)(g)=\chi(g_pg_{p'}^p)$ for $\chi\in R_\K(C_G(P),e_P)$ and $g\in C_G(P)$, is a well-defined (perfect) isometry. Furthermore the collection of isometries $I^P$, $P\subgp D$, form an isotypy between $\OO G\sigma(b)$ and $\OO Gb$ relative to $(D,\sigma(e_D))$, $(D,e_D)$, and $\id_D:D\isoto D$.
\end{theorem}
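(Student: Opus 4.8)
The plan is to reduce everything to the observation that, for a fixed $P\subgp D$, the map $I^P$ is an Adams operation. Writing $C=C_G(P)$ and picking by the Chinese Remainder Theorem an integer $m$ with $m\equiv 1\pmod{|C|_p}$ and $m\equiv p\pmod{|C|_{p'}}$, one has $g^m=g_p\,g_{p'}^{\,p}$ for every $g\in C$, so $I^P(\chi)(g)=\chi(g^m)$. Since $\gcd(m,|C|)=1$, the operation $\chi\mapsto\chi(\,\cdot\,^m)$ on $R_\K(C)$ is Galois conjugation by the element $\gamma_m\in\Gal(\Q(\zeta_{|C|})/\Q)$ raising $|C|$th roots of unity to the $m$th power; hence $I^P$ permutes $\Irr_\K(C)$, is an isometry, and (because $g\mapsto g^m$ bijects $C_{p'}$ and $\gamma_m$ lies in the decomposition group of the prime of $\OO$) restricts to the asserted $p'$- and $\OO$-isomorphisms, i.e. is a perfect isometry. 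Finally, reducing modulo the maximal ideal of $\OO$: using that the block idempotents of $kC$ have coefficients in $\F_p(\mu_{|C|_{p'}})$ and that $\gamma_m$ agrees there with the Frobenius $\sigma$ (both raise $|C|_{p'}$th roots of unity to the $p$th power), one gets $\gamma_m(e_P)=\sigma(e_P)$ in the sense of \ref{noth:blocks}, so $I^P$ carries $R_\K(C,e_P)$ isomorphically onto $R_\K(C,\sigma(e_P))$. None of this uses that $P$ is cyclic — it is exactly Kessar's argument, run over an arbitrary subgroup of $D$.

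It remains to verify the equivariance and compatibility axioms for all subgroups, and here again the arguments are those of \cite{Kessar_2012}; I only indicate why cyclicity is irrelevant. For equivariance, suppose $(g,h)\in G\times G$ satisfies ${}^{(g,h)}(\Delta(Q),\sigma(e_Q)\tensor e_Q^\ast)\le(\Delta(D),\sigma(e_D)\tensor e_D^\ast)$. Unwinding this inclusion of twisted diagonal subgroups forces $c:=h^{-1}g\in C_G(Q)$ and ${}^gQ={}^hQ\subgp D$, and projecting the Brauer-pair inclusion to each factor together with Lemma \ref{lem:one} identifies the target as $I^{{}^hQ}\colon R_\K(C_G({}^hQ),e_{{}^hQ})\isoto R_\K(C_G({}^hQ),\sigma(e_{{}^hQ}))$. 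Now $G$-conjugation commutes with $\chi\mapsto\chi(\,\cdot\,^m)$, and ${}^c\psi=\psi$ for every class function $\psi$ of $C_G(Q)$ since $c\in C_G(Q)$ acts on $C_G(Q)$ by an inner automorphism; hence ${}^gI^Q(\psi)=I^{{}^gQ}({}^g\psi)=I^{{}^hQ}({}^{hc}\psi)=I^{{}^hQ}({}^h\psi)$, as required.

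For the compatibility square, fix $Q\subgp D$ and $v\in C_E(Q)=C_D(Q)$; then $v$ is a $p$-element and $Q\gp{v}\subgp D$. At the level of the total generalized decomposition maps $d^{v}_{C_G(Q)}=\sum_f d^{v,f}_{C_G(Q)}$ (sum over $f\in\bli(\OO C_G(Q\gp{v}))$) the square commutes outright, by the single identity $(vs)^m=v\,s^{p}$ valid for a $p$-element $v$ commuting with a $p'$-element $s$: both composites send $\chi$ to the class function supported on $C_G(Q\gp{v})_{p'}$ given there by $s\mapsto\chi(v s^{p})$. To descend to the single-block maps in the diagram, note that $d^{v}_{C_G(Q)}$ restricted to $CF(C_G(Q),e_Q;\K)$ splits as the direct sum of its components $d^{v,f}_{C_G(Q)}$, supported on distinct blocks of $C_G(Q\gp{v})$; that $I^{Q\gp{v}}$ carries the $f$-component onto the $\sigma(f)$-component (first paragraph); and that by Lemma \ref{lem:one} the assignment $(\,\cdot\,,e)\mapsto(\,\cdot\,,\sigma(e))$ sends $(Q,e_Q)\le(Q\gp{v},e_{Q\gp{v}})$ to $(Q,\sigma(e_Q))\le(Q\gp{v},\sigma(e_{Q\gp{v}}))$. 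Comparing the $\sigma(e_{Q\gp{v}})$-components of the two sides of the total square then yields the diagram.

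I expect the only point genuinely requiring care to be this last piece of bookkeeping, namely checking that the whole local structure attached to $\sigma(b)$ relative to $(D,\sigma(e_D))$ is a faithful $\sigma$-relabeling of the one attached to $b$ relative to $(D,e_D)$: that the idempotents $\sigma(e_P)$ fit together into the maximal pair $(D,\sigma(e_D))$, that self-centralizing pairs correspond, and that the generalized decomposition maps respect the correspondence. But all of this is precisely what Lemmas \ref{lem:one} and \ref{lem:two}, together with the standard compatibility of the generalized decomposition map of \ref{noth:gendec} with blocks, provide; once it is in hand the verifications of \cite{Kessar_2012} transfer verbatim, with no new difficulty arising from passing to non-cyclic subgroups.
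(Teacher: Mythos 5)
Your argument is correct, and its engine is the same as the paper's --- it is Kessar's Galois twist, resting on the same key identity --- but your verification takes a more self-contained route than the paper does. The paper cites \cite[Lemma 3.3]{Kessar_2012} for each $I^P$ being a well-defined isometry from $R_\K(C_G(P),e_P)$ onto $R_\K(C_G(P),\sigma(e_P))$, calls equivariance straightforward, and proves compatibility by using equivariance to reduce to fully $\mathcal{F}$-centralized $P$ (where $(C_D(P),e_{PC_D(P)})$ is a maximal Brauer pair for $\OO C_G(P)e_P$) and then invoking the proof of \cite[Theorem 1.1]{Kessar_2012} with $G,b$ replaced by $C_G(P),e_P$. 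You instead re-derive the Adams/Galois description and the block-tracking $\gamma_m(e_P)=\sigma(e_P)$ from scratch, and you check the compatibility square at every $Q\subgp D$ and $v\in C_D(Q)$ directly: the square summed over all blocks of $\OO C_G(Q\gp{v})$ commutes by $(vs)^m=vs^p$, and comparing $\sigma(e_{Q\gp{v}})$-components (legitimate because $d^{v,f}$ lands in the $f$-component, as recorded in \ref{noth:gendec}, and the twist carries $f$-components to $\sigma(f)$-components) yields the block-level square, with no reduction to fully centralized subgroups and no appeal to the internals of Kessar's proof. The standard facts you lean on (the block decomposition of the generalized decomposition map, and the componentwise projection of Brauer-pair containments for twisted diagonal subgroups in the equivariance step) are used at the same level of detail as in the paper, so I see no gap; your route buys independence from Kessar's published argument, the paper's buys brevity. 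One minor simplification: $\OO$-valuedness of $I^P(\chi)$ is immediate, since $I^P(\chi)(g)=\chi(g_pg_{p'}^p)$ is a value of $\chi$, so the decomposition-group remark is not needed (and, as noted in \ref{noth:isot}, perfection is automatic for the isometries of an isotypy in any case).
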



\begin{proof}
	Since
	\begin{equation*}
		\mathcal{F}_{(D,e_D)}(G,b)=\mathcal{F}_{(D,\overline{e_D})}(G,\overline{b})=\mathcal{F}_{(D,\sigma(\overline{e_D}))}(G,\sigma(\overline{b}))=\mathcal{F}_{(D,\sigma(e_D))}(G,\sigma(b))
	\end{equation*}
	by Lemma \ref{lem:one}, the identity of $D$ does in fact define an isomorphism of fusion systems and so the hypotheses for the existence of an isotypy between $\OO G\sigma(b)$ and $\OO Gb$ are satisfied. Let $\mathcal{F}=\mathcal{F}_{(D,e_D)}(G,b)$.
	
	The maps $I^P$, $P\subgp D$, are well-defined isometries by \cite[Lemma 3.3]{Kessar_2012}, and verifying that these isometries satisfy the equivariance condition is straightforward. Now since the isometries $I^P$, $P\subgp D$, are equivariant, to verify the compatiblity condition it is enough to show that
	\begin{equation}\label{eqn:1}\tag{$\ast$}
		d_{C_G(P)}^{u,\sigma(e_{P\gp{u}})}(I^P(\psi))=I^{P\gp{u}}(d_{C_G(P)}^{u,e_{P\gp{u}}}(\psi))
	\end{equation}
	for each fully $\mathcal{F}$-centralized subgroup $P\subgp D$, each $u\in C_D(P)$ and each $\psi\in CF(C_G(P),e_P;\K)$. If $P\subgp D$ is fully $\mathcal{F}$-centralized then by \cite[Theorem IV.3.19(b)]{Aschbacher_2011} $(C_D(P),e_{PC_D(P)})$ is a maximal $\OO C_G(P)e_P$-Brauer pair and if $Q\subgp C_D(P)$ then $(Q,e_{PQ})\leq(C_D(P),e_{PC_D(P)})$. By Lemma \ref{lem:one} $(C_D(P),\sigma(e_{PC_D(P)}))$ is a maximal $\OO C_G(P)\sigma(e_P)$-Brauer pair and
	\begin{equation*}
		\mathcal{F}_{(C_D(P),e_{PC_D(P)})}(C_G(P),e_P)=\mathcal{F}_{(C_D(P),\sigma(e_{PC_D(P)}))}(C_G(P),\sigma(e_P)).
	\end{equation*}
	The equality (\ref{eqn:1}) now follows directly from the proof of \cite[Theorem 1.1]{Kessar_2012}, with $G$ replaced by $C_G(P)$ and $b$ replaced by $e_P$.
\end{proof}

\begin{nothing}\label{noth:ppermequiv}$p$\textit{-permutation equivalences.}
	Recall that an $RG$-module $M$ is called a \textit{trivial source} or $p$\textit{-permutation module} if $\Res_P^GM$ is a permutation $RP$-module for all $p$-subgroups $P\subgp G$. The class of trivial source modules is closed under $\dsum$, $\tensor_R$, taking direct summands, among other operations. The Grothendieck ring of the category of trivial source $RG$-modules (taken with respect to split short exact sequences) is denoted $T_R(G)$. As an abelian group, $T_R(G)$ is free of finite rank with standard basis given by the isomorphism classes $[M]$ of indecomposable trivial source $RG$-modules $M$. If $e$ is a central idempotent of $RG$ then $T_R(G,e)$ denotes the subgroup of $T_R(G)$ spanned by the isomorphism classes $[M]$ of indecomposable trivial source $RGe$-modules $M$. The functor $k\tensor_{\OO}-$ induces a ring isomorphism $T_\OO(G)\isoto T_k(G)$ that maps standard basis elements to standard basis elements and restricts to a group isomorphism $T_\OO(G,e)\isoto T_k(G,\overline{e})$.
	
	If $H$ is another finite group and $e$ and $f$ are nonzero central idempotents of $\OO G$ and $\OO H$, respectively, then Boltje and Perepelitsky \cite{Boltje_2020} define a $p$\textit{-permutation equivalence} between $\OO Ge$ and $\OO Hf$ to be an element $\gamma\in T_\OO(G\times H,e\tensor f^\ast)$ satisfying
	\begin{itemize}
		\item[(1)] $\gamma$ is a $\Z$-linear combination of isomorphism classes of indecomposable trivial source $\OO[G\times H](e\tensor f^\ast)$-modules that have twisted diagonal vertices,
		\item[(2)] $\gamma\tensor_{\OO H}\gamma^\circ=[\OO Ge]\in T_\OO(G\times G,e\tensor e^\ast)$, and
		\item[(3)] $\gamma^\circ\tensor_{\OO G}\gamma=[\OO Hf]\in T_\OO(H\times H,f\tensor f^\ast)$,
	\end{itemize} 
	where $\gamma^\circ$ denotes the $\OO$-dual of $\gamma$.
	
	Let $A=\OO Ga\in\Bl(\OO G)$, $B=\OO Hb\in\Bl(\OO H)$ and let $\gamma$ be a $p$-permutation equivalence between $A$ and $B$. Boltje and Perepelitsky associate to $\gamma$ a subset $\BP_\OO(\gamma)\subseteq \BP_\OO(G\times H,a\tensor b^\ast)$, whose elements they call $\gamma$\textit{-Brauer pairs}. They show \cite[Theorem 10.11]{Boltje_2020} that the set of $\gamma$-Brauer pairs is stable under $G\times H$-conjugation, that $\BP_\OO(\gamma)$ is an ideal of the poset $\BP_\OO(G\times H,a\tensor b^\ast)$, and that any two maximal $\gamma$-Brauer pairs are $G\times H$-conjugate. Moreover any $\gamma$-Brauer pair is of the form $(\Delta(P,\psi,Q),e\tensor f^\ast)$ where $\Delta(P,\psi,Q)$ is a twisted diagonal subgroup of $G\times H$, $e\in\bli(\OO C_G(P))$ and $f\in\bli(\OO C_H(Q))$.
\end{nothing}

\begin{nothing}\label{noth:isofrompperm}\textit{Isotypies from }$p$\textit{-permutation equivalences.}
	Let $G$ and $H$ be finite groups, $A=\OO Ga\in\Bl(\OO G)$, $B=\OO Hb\in\Bl(\OO H)$, and let $\gamma$ be a $p$-permutation equivalence between $A$ and $B$. Choose a maximal $\gamma$-Brauer pair $(\Delta(D,\phi,E),e_D\tensor f_E^\ast)$. Then $(D,e_D)$ is a maximal $a$-Brauer pair, $(E,f_E)$ is a maximal $b$-Brauer pair, and $\phi:E\isoto D$ induces an isomorphism of fusion systems $\phi:\mathcal{F}_{(E,f_E)}(H,b)\isoto\mathcal{F}_{(D,e_D)}(G,a)$ \cite[Theorems 10.11(c), 11.2]{Boltje_2020}. For each $P\subgp D$ (respectively, $Q\subgp E$) let $e_P$ (resp. $f_Q$) denote the unique block idempotent of $\OO C_G(P)$ (resp. $\OO C_H(Q)$) such that $(P,e_P)\leq (D,e_D)$ (resp. $(Q,f_Q)\leq(E,f_E)$). Then for each $Q\subgp E$,
	\begin{equation*}
		(\Delta(\phi(Q),\phi,Q),e_{\phi(Q)}\tensor f_Q^\ast)\leq(\Delta(D,\phi,E),e_D\tensor f_E^\ast)
	\end{equation*}
	is a containment of $a\tensor b^\ast$-Brauer pairs.
	
	The $p$-permutation equivalence $\gamma$ induces an isotypy between $A$ and $B$ (relative to $(D,e_D)$, $(E,f_E)$ and $\phi:E\isoto D$) via the following construction: for each $Q\subgp E$, applying the \textit{Brauer construction} (cf. \cite[Chapter 5.4]{Linckelmann_2018}) at $\Delta(\phi(Q),\phi,Q)$ and lifting to $\OO$ yields an element
	\begin{equation*}
		\gamma(\Delta(\phi(Q),\phi,Q))\in T_\OO(N_{G\times H}(\Delta(\phi(Q),\phi,Q))).
	\end{equation*}
	Restricting this element to $C_G(\phi(Q))\times C_H(Q)$ and taking the $e_{\phi(Q)}\tensor f_Q^\ast$-component of the restriction produces an element
	\begin{equation*}
		\gamma(\Delta(\phi(Q),\phi,Q),e_{\phi(Q)}\tensor f_Q^\ast)\in T_\OO(C_G(\phi(Q))\times C_H(Q),e_{\phi(Q)}\tensor f_Q^\ast).
	\end{equation*}
	Let $\mu_Q$ denote the character of $\gamma(\Delta(\phi(Q),\phi,Q),e_{\phi(Q)}\tensor f_Q^\ast)$ and let $I^Q:R_\K(C_H(Q),f_Q)\to R_\K(C_G(\phi(Q)),e_{\phi(Q)})$ be the map $I^Q=\mu_Q\tensor_{\K C_H(Q)}-$. By \cite[Theorem 15.4]{Boltje_2020}, the collection of maps $I^Q$, $Q\subgp E$, form an isotypy between $A$ and $B$.
\end{nothing}

\begin{definition}\label{defn:lift}
	Let $G$ and $H$ be finite groups, $A=\OO Ga\in\Bl(\OO G)$ and $B=\OO Hb\in\Bl(\OO H)$. Let $I^Q$, $Q\subgp E$, be an isotypy between $A$ and $B$ defined relative to a maximal $a$-Brauer pair $(D,e_D)$, a maximal $b$-Brauer pair $(E,f_E)$, and an isomorphism $\phi:E\isoto D$ that induces an isomorphism $\phi:\mathcal{F}_{(E,f_E)}(H,b)\isoto\mathcal{F}_{(D,e_D)}(G,a)$. We say the isotypy $I^Q$, $Q\subgp E$, \textit{lifts} to a $p$-permutation equivalence between $A$ and $B$ if there exists a $p$-permutation equivalence $\gamma\in T_\OO(G\times H,a\tensor b^\ast)$ that possesses $(\Delta(D,\phi,E),e_D\tensor f_E^\ast)$ as a maximal $\gamma$-Brauer pair and such that the isotypy $I^Q$, $Q\subgp E$, is obtained from $\gamma$ via the construction outlined in \ref{noth:isofrompperm} above.
\end{definition}

\begin{nothing}\label{noth:presKP}\textit{Preservation of K\"{u}lshammer-Puig classes.}
	Let $A\in\Bl(\OO G)$, $B\in\Bl(\OO H)$ and let $\gamma$ be a $p$-permutation equivalence between $A$ and $B$. Suppose  $(\Delta(P,\psi,Q),e\tensor f^\ast)\in\BP_\OO(\gamma)$ is such that $(Q,f)$ is a self-centralizing $\OO H$-Brauer pair. Then $(P,e)$ is a self-centralizing $\OO G$-Brauer pair by \cite[Theorem 11.2]{Boltje_2020}. Let $I=N_G(P,e)$, $\overline{I}=I/PC_G(P)$, $J=N_H(Q,f)$ and $\overline{J}=J/QC_H(Q)$. One has a group isomorphism $\eta:J/C_H(Q)\isoto I/C_G(P)$ which maps $hC_H(Q)$ to $gC_G(P)$ if and only if $(g,h)\in N_{G\times H}(\Delta(P,\psi,Q),e\tensor f^\ast)$ \cite[Proposition 11.1]{Boltje_2020}. The map $\eta$ descends to a group isomorphism $\overline{\eta}:\overline{J}\isoto\overline{I}$, which in turn induces a group isomorphism $\overline{\eta}^\ast:H^2(\overline{I},k^\times)\isoto H^2(\overline{J},k^\times)$. If $\kappa_{(P,e)}\in H^2(\overline{I},k^\times)$ and $\kappa_{(Q,f)}\in H^2(\overline{J},k^\times)$ denote the K\"{u}lshammer-Puig classes at $(P,e)$ and $(Q,f)$, respectively, then $\kappa_{(Q,f)}=\overline{\eta}^\ast(\kappa_{(P,e)})$ \cite[Theorem 13.4]{Boltje_2020}.
\end{nothing}

\section{Results}

Keep all of the notation of the previous section --- in particular $G$ is a finite group, $(\K,\OO,k)$ is a $p$-modular system large enough for $G$ such that $k$ is algebraically closed, $R$ denotes either of the rings $\OO$ or $k$, and $\sigma:k\isoto k$ is the Frobenius automorphism of $k$. Recall that $\sigma$ induces an automorphism of the $n$th cohomology group $H^n(G,k^\times)$ for any $n\geq 0$, defined by $\kappa\mapsto\kappa^p$ for each $\kappa\in H^n(G,k^\times)$. We denote this automorphism of $H^n(G,k^\times)$ abusively by $\sigma$ in the sequel.

\begin{lemma}\label{lem:three}
	Let $(P,e)$ be a self-centralizing $RG$-Brauer pair. Then by Lemma \ref{lem:two}, $(P,\sigma(e))$ is self-centralizing. Let $I=N_G(P,e)=N_G(P,\sigma(e))$ and set $\overline{I}=I/PC_G(P)$. Let $\kappa_{(P,e)},\kappa_{(P,\sigma(e))}\in H^2(\overline{I},k^\times)$ denote the K\"{u}lshammer-Puig classes at $(P,e)$ and $(P,\sigma(e))$, respectively. Then $\kappa_{(P,\sigma(e))}=\sigma(\kappa_{(P,e)})$, i.e., $\kappa_{(P,\sigma(e))}=\kappa_{(P,e)}^p$.
\end{lemma}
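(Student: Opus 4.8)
The plan is to compute both Külshammer--Puig classes from one and the same projective representation, the two computations differing only by a Frobenius twist of the scalars. Since the class at $(P,e)$ depends only on the reduction $\overline{e}$ and $\overline{\sigma(e)}=\sigma(\overline{e})$, it suffices to treat $R=k$; write $N=PC_G(P)$, so that $\overline{I}=I/N$. Recall from \ref{noth:selfcent} how $\kappa_{(P,e)}$ is produced: let $V$ be the (unique, since $(P,e)$ is self-centralizing) simple $kNe$-module, which is $I$-stable; choose $k$-linear automorphisms $\rho(i)\in\GL_k(V)$, $i\in I$, with $\rho(n)$ the action of $n$ on $V$ for $n\in N$ and $\rho(i)\rho(n)\rho(i)^{-1}=\rho({}^i n)$ for all $i\in I$, $n\in N$; then by Schur's lemma $\rho(i)\rho(j)=\alpha(i,j)\rho(ij)$ for a cocycle $\alpha\in Z^2(I,k^\times)$ which is inflated from $\overline{I}$, and $\kappa_{(P,e)}$ is the class it determines in $H^2(\overline{I},k^\times)$ (up to a normalization convention).

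First I would pin down the simple $kN\sigma(e)$-module. Since $\sigma$ restricts to a ring isomorphism $kNe\isoto kN\sigma(e)$, its inverse turns $V$ into a module $V'$ over $kN\sigma(e)$ with $a\cdot v=\sigma^{-1}(a)v$; this $V'$ is simple, and by Lemma \ref{lem:two} together with \cite[Lemma 5.8.12]{Nagao_1989} it is the unique simple $kN\sigma(e)$-module, hence the one attached to $(P,\sigma(e))$ in \ref{noth:selfcent}. Two elementary observations, both because $\sigma$ fixes every element of $N$ and sends a scalar $\lambda$ to $\lambda^p$, drive the rest: (i) every $n\in N$ acts on $V'$ exactly as on $V$; (ii) the $k$-scalar action on $V'$ is $\lambda\cdot v=\sigma^{-1}(\lambda)v$. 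By (i) the same linear maps $\rho(i)$ still satisfy $\rho(n)=$ (action of $n$ on $V'$) and $\rho(i)\rho(n)\rho(i)^{-1}=\rho({}^i n)$ (an identity of maps of abelian groups), and by (ii) each $\rho(i)$ remains $k$-linear on $V'$; so $\{\rho(i)\}$ is an admissible system for computing $\kappa_{(P,\sigma(e))}$. Its cocycle $\alpha'$ is characterized by $\rho(i)\rho(j)=\alpha'(i,j)\rho(ij)$ \emph{as maps on $V'$}. Now the endomorphism $\rho(i)\rho(j)\rho(ij)^{-1}$ of the abelian group $V=V'$ equals scalar multiplication by $\alpha(i,j)$ for the $V$-structure and by $\alpha'(i,j)$ for the $V'$-structure; by (ii) these force $\sigma^{-1}(\alpha'(i,j))=\alpha(i,j)$, i.e.\ $\alpha'=\sigma\circ\alpha$. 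Passing to cohomology, $\kappa_{(P,\sigma(e))}=[\sigma\circ\alpha]=\sigma(\kappa_{(P,e)})=\kappa_{(P,e)}^p$; since the coefficient map $\sigma$ on $H^2(\overline{I},k^\times)$ commutes with inversion, the precise normalization convention for the Külshammer--Puig class is immaterial here.

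The computation itself is routine bookkeeping. The one step I would be careful to spell out is the identification of $V'$ with the module that \ref{noth:selfcent} attaches to $(P,\sigma(e))$: this amounts to checking that forming the pullback along $\sigma^{-1}$ is compatible with the (implicit) passage between block idempotents of $C_G(P)$ and of $N=PC_G(P)$ and with the definition of $\sigma(e)$, which holds because $\sigma$ commutes with conjugation and with the Brauer homomorphism and hence intertwines all the structure maps in play; but it deserves an explicit sentence rather than being assumed.
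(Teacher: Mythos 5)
Your proposal is correct, and the key idea is the same as the paper's: the Külshammer--Puig class at $(P,\sigma(e))$ is obtained by applying the Frobenius twist to the projective representation affording $\kappa_{(P,e)}$, which multiplies the cocycle entrywise by the $p$th power. The only difference is the packaging. The paper fixes a basis, takes matrix representations $Y:PC_G(P)\to GL_n(k)$ and $X:I\to GL_n(k)$, and applies the entrywise Frobenius $F:GL_n(k)\to GL_n(k)$ to obtain $F\circ X$ affording $\kappa_{(P,\sigma(e))}$ with factor set $\alpha^p$. You instead keep the underlying abelian group and the same endomorphisms $\rho(i)$, and twist the $k$-structure on the module by restricting scalars along $\sigma^{-1}$; the identity $\alpha'=\sigma\circ\alpha$ then drops out from comparing the scalar $\rho(i)\rho(j)\rho(ij)^{-1}$ in the two $k$-structures. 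These descriptions are equivalent: after choosing a basis, $v\mapsto\sigma(v)$ (coordinatewise) is a $kN$-isomorphism from your $V'$ onto the module afforded by $F\circ Y$. Your coordinate-free version makes it transparent that only the scalar ambiguity, and not the action of group elements, is affected by $\sigma$; the paper's matrix version is slightly more direct about producing a projective representation with the precise extension property ($X(hx)=X(h)X(x)$, $X(xh)=X(x)X(h)$) needed to guarantee the factor set is inflated from $\overline I$ --- a point your writeup treats a bit informally, but which carries over without difficulty. Your closing remark about the normalization convention for $\kappa$ being immaterial (since $\sigma$ commutes with inversion on $H^2$) is a nice observation not spelled out in the paper.
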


\begin{proof}
	Let $Y:PC_G(P)\to GL_n(k)$ be a representation that affords the unique irreducible $k[PC_G(P)]\overline{e}$-module and let $X:I\to GL_n(k)$ be a projective representation extending $Y$ that satisfies $X(hx)=X(h)X(x)$ and $X(xh)=X(x)X(h)$ for all $h\in PC_G(P)$, $x\in I$. The factor set $\alpha$ associated to $X$ descends to a factor set $\overline{\alpha}\in Z^2(\overline{I},k^\times)$ whose image in $H^2(\overline{I},k^\times)$ is $\kappa_{(P,e)}$.
	
	Let $F:GL_n(k)\to GL_n(k)$ denote the standard Frobenius map $(a_{ij})\mapsto(a_{ij}^p)$. Then $F\circ Y:PC_G(P)\to GL_n(k)$ affords an irreducible $k[PC_G(P)]\sigma(\overline{e})$-module, $F\circ X$ is a projective representation of $I$ extending $F\circ Y$, and we have $(FX)(hx)=(FX)(h)(FX)(x)$ and $(FX)(xh)=(FX)(x)(FX)(h)$ for all $h\in PC_G(P)$, $x\in I$.
	
	Now if $x,y\in I$ then
	\begin{align*}
		(FX)(x)(FX)(y)	&=F(X(x)X(y))=F(\alpha(x,y)X(xy))\\
						&=\alpha(x,y)^p(FX)(xy),
	\end{align*}
	so the factor set associated to $F\circ X$ is $\alpha^p$. This factor set descends to $\overline{\alpha^p}\in Z^2(\overline{I},k^\times)$, and the image of $\overline{\alpha^p}$ in $H^2(\overline{I},k^\times)$ is $\kappa_{(P,\sigma(e))}$. Since $\overline{\alpha^p}=\overline{\alpha}^p$ we have $\kappa_{(P,\sigma(e))}=\kappa_{(P,e)}^p$ and the proof is complete.
\end{proof}


\begin{theorem}\label{thm:two}
	Let $b\in\bli(\OO G)$ and let $(D,e_D)\in\BP_\OO(G,b)$ be a maximal $b$-Brauer pair. For each $P\subgp D$ let $e_P$ denote the unique block idempotent of $\OO C_G(P)$ such that $(P,e_P)\leq(D,e_D)$ and let $I_P=N_G(P,e_P)$. Suppose there exists a subgroup $P\subgp D$ such that $(P,e_P)$ is self-centralizing and the K\"{u}lshammer-Puig class $\kappa_{(P,e_P)}$ is not fixed by the Frobenius automorphism of $H^2(I_P/PC_G(P),k^\times)$, i.e., $\kappa_{(P,e_P)}\neq\kappa_{(P,e_P)}^p$. Then the isotypy of Theorem \ref{thm:one} does not lift to a $p$-permutation equivalence between $\OO G\sigma(b)$ and $\OO Gb$.
\end{theorem}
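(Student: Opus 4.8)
The plan is to argue by contradiction. Suppose the isotypy $I^P$, $P\subgp D$, of Theorem \ref{thm:one} lifts to a $p$-permutation equivalence $\gamma\in T_\OO(G\times G,\sigma(b)\tensor b^\ast)$. That isotypy is defined relative to the maximal $\sigma(b)$-Brauer pair $(D,\sigma(e_D))$, the maximal $b$-Brauer pair $(D,e_D)$, and $\id_D:D\isoto D$, so by Definition \ref{defn:lift} the element $\gamma$ possesses $(\Delta(D),\sigma(e_D)\tensor e_D^\ast)$ as a maximal $\gamma$-Brauer pair. Fix the subgroup $P\subgp D$ supplied by the hypothesis. By the discussion in \ref{noth:isofrompperm} one has a containment of $\sigma(b)\tensor b^\ast$-Brauer pairs
\begin{equation*}
	(\Delta(P),\sigma(e_P)\tensor e_P^\ast)\leq(\Delta(D),\sigma(e_D)\tensor e_D^\ast),
\end{equation*}
and since $\BP_\OO(\gamma)$ is an ideal of $\BP_\OO(G\times G,\sigma(b)\tensor b^\ast)$ containing the right-hand pair (see \ref{noth:ppermequiv}), the pair $(\Delta(P),\sigma(e_P)\tensor e_P^\ast)$ belongs to $\BP_\OO(\gamma)$.

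I would then apply the preservation statement \ref{noth:presKP} to this $\gamma$-Brauer pair, taking the two factors ``$G$'' and ``$H$'' there to be our group $G$ carrying the blocks $\OO G\sigma(b)$ and $\OO Gb$, respectively, so that ``$(Q,f)$'' is $(P,e_P)$ and ``$(P,e)$'' is $(P,\sigma(e_P))$. Since $(P,e_P)$ is self-centralizing by hypothesis, \ref{noth:presKP} gives that $(P,\sigma(e_P))$ is self-centralizing (consistent with Lemma \ref{lem:two}) and that
\begin{equation*}
	\kappa_{(P,e_P)}=\overline{\eta}^\ast\bigl(\kappa_{(P,\sigma(e_P))}\bigr)
\end{equation*}
in $H^2(\overline{I_P},k^\times)$, where $\overline{I_P}=I_P/PC_G(P)$ --- here I use that $N_G(P,e_P)=N_G(P,\sigma(e_P))=I_P$, which follows from the $G$-equivariance of the map of Lemma \ref{lem:one} --- and where $\overline{\eta}$ is the automorphism of $\overline{I_P}$ induced by the isomorphism $\eta$ of \cite[Proposition 11.1]{Boltje_2020}.

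The crux of the argument, and the step I expect to demand the most care, is to show that $\overline{\eta}$ is the identity. For this I would unwind the definition of $\eta$: it sends $hC_G(P)$ to $gC_G(P)$ precisely when $(g,h)\in N_{G\times G}(\Delta(P),\sigma(e_P)\tensor e_P^\ast)$. The point is that for every $h\in I_P$ the diagonal element $(h,h)$ lies in this normalizer: conjugation by $(h,h)$ stabilizes $\Delta(P)$ because $h\in N_G(P)$, stabilizes $e_P$ because $h\in N_G(P,e_P)$, and stabilizes $\sigma(e_P)$ because $h\in N_G(P,e_P)=N_G(P,\sigma(e_P))$. Hence $\eta(hC_G(P))=hC_G(P)$ for all $h\in I_P$, so $\eta=\id$, $\overline{\eta}=\id$, and $\overline{\eta}^\ast=\id$. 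Combining this with the displayed equality yields $\kappa_{(P,e_P)}=\kappa_{(P,\sigma(e_P))}$, whereas Lemma \ref{lem:three} gives $\kappa_{(P,\sigma(e_P))}=\kappa_{(P,e_P)}^p$. Therefore $\kappa_{(P,e_P)}=\kappa_{(P,e_P)}^p$, contradicting the hypothesis that $\kappa_{(P,e_P)}$ is not fixed by the Frobenius automorphism of $H^2(I_P/PC_G(P),k^\times)$. Hence no such $\gamma$ exists and the isotypy of Theorem \ref{thm:one} does not lift to a $p$-permutation equivalence between $\OO G\sigma(b)$ and $\OO Gb$.
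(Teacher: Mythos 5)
Your proposal is correct and follows essentially the same line of argument as the paper: assume the isotypy lifts to $\gamma$, note $(\Delta(P),\sigma(e_P)\tensor e_P^\ast)$ is then a $\gamma$-Brauer pair, show the induced map $\eta$ is trivial, and derive $\kappa_{(P,e_P)}=\kappa_{(P,e_P)}^p$ by combining the preservation statement \ref{noth:presKP} with Lemma \ref{lem:three}. The only (immaterial) variation is at the step where you establish $\eta=\id$: you exhibit the diagonal elements $(h,h)$ inside $N_{G\times G}(\Delta(P),\sigma(e_P)\tensor e_P^\ast)$, whereas the paper observes directly that any $(g,h)$ normalizing $\Delta(P)$ forces $gh^{-1}\in C_G(P)$, so that $gC_G(P)=hC_G(P)$.
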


\begin{proof}
	Recall that the isotypy of Theorem \ref{thm:one} is defined relative to the maximal $\sigma(b)$-Brauer pair $(D,\sigma(e_D))$, the maximal $b$-Brauer pair $(D,e_D)$, and the identity automorphism of $D$. Suppose this isotypy lifts to a $p$-permutation equivalence $\gamma$ between $\OO G\sigma(b)$ and $\OO Gb$. Then by Definition \ref{defn:lift} $(\Delta(D),\sigma(e_D)\tensor e_D^\ast)$ is a maximal $\gamma$-Brauer pair. 
	
	Now $(\Delta(P),\sigma(e_P)\tensor e_P^\ast)\leq(\Delta(D),\sigma(e_D)\tensor e_D^\ast)$, so $(\Delta(P),\sigma(e_P)\tensor e_P^\ast)$ is a $\gamma$-Brauer pair. Recall from the remarks of \ref{noth:presKP} that in this case there is a group automorphism $\eta:I_P/C_G(P)\isoto I_P/C_G(P)$ that induces an automorphism $\overline{\eta}^\ast$ of $H^2(I_P/PC_G(P),k^\times)$ mapping $\kappa_{(P,\sigma(e_P))}$ to $\kappa_{(P,e_P)}$. If $g,h\in I_P$ then $\eta(hC_G(P))=gC_G(P)$ if and only if $(g,h)\in N_{G\times G}(\Delta(P),\sigma(e_P)\tensor e_P^\ast)$. But if $(g,h)$ normalizes the diagonal subgroup $\Delta(P)$ then $gh^{-1}\in C_G(P)$, so $\eta$ is the identity automorphism of $I_P/C_G(P)$. It follows that $\overline{\eta}^\ast$ is the identity of $H^2(I_P/PC_G(P),k^\times)$ and $\kappa_{(P,\sigma(e_P))}=\kappa_{(P,e_P)}$. On the other hand, $\kappa_{(P,\sigma(e_P))}=\kappa_{(P,e_P)}^p$ by Lemma \ref{lem:three} and hence $\kappa_{(P,e_P)}=\kappa_{(P,e_P)}^p$, a contradiction.
\end{proof}

Blocks as in Theorem \ref{thm:two} exist: see Example \ref{ex:one} below. In fact, the blocks constructed in Example \ref{ex:one} satisfy a stronger statement which is provided in the following theorem.

\begin{theorem}\label{thm:three}
	Let $b\in\bli(\OO G)$, let $(D,e_D)\in\BP_\OO(G,b)$ be a maximal $b$-Brauer pair, and let $\mathcal{F}=\mathcal{F}_{(D,e_D)}(G,b)$. For each $P\subgp D$ let $e_P$ denote the unique block idempotent of $\OO C_G(P)$ such that $(P,e_P)\leq(D,e_D)$ and set $I_P=N_G(P,e_P)$. Suppose there exists a subgroup $P\subgp D$ such that
	\begin{itemize}
		\item[(1)] $(P,e_P)$ is self-centralizing and $\kappa_{(P,e_P)}\neq\kappa_{(P,e_P)}^p$, where $\kappa_{(P,e_P)}$ denotes the K\"{u}lshammer-Puig class at $(P,e_P)$,
		\item[(2)] If $\phi:D\isoto D$ is an automorphism of $D$ that induces an automorphism of fusion systems $\phi:\mathcal{F}\isoto\mathcal{F}$ then $\phi|_P=c_g$ for some $g\in I_P$.
	\end{itemize}
	Then $\OO G\sigma(b)$ and $\OO Gb$ are not $p$-permutation equivalent.
\end{theorem}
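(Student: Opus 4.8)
The strategy is to show that any $p$-permutation equivalence $\gamma$ between $\OO G\sigma(b)$ and $\OO Gb$ would be forced, after adjusting by $G$-conjugation, to have $(\Delta(D),\sigma(e_D)\tensor e_D^\ast)$ as a maximal $\gamma$-Brauer pair, and then to derive the same contradiction as in the proof of Theorem \ref{thm:two}. Concretely, suppose $\gamma\in T_\OO(G\times G,\sigma(b)\tensor b^\ast)$ is a $p$-permutation equivalence. By \ref{noth:ppermequiv} any maximal $\gamma$-Brauer pair has the form $(\Delta(D',\psi,E'),e'\tensor f'^\ast)$ with $(D',e')$ a maximal $\sigma(b)$-Brauer pair and $(E',f')$ a maximal $b$-Brauer pair, and $\psi:E'\isoto D'$ inducing an isomorphism of fusion systems $\mathcal{F}_{(E',f')}(G,b)\isoto\mathcal{F}_{(D',e')}(G,\sigma(b))$. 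Since all maximal $b$-Brauer pairs are $G$-conjugate, and likewise for $\sigma(b)$, after replacing $\gamma$ by a $G\times G$-conjugate we may assume $(E',f')=(D,e_D)$ and $(D',e')=(D,\sigma(e_D))$ — here one uses Lemma \ref{lem:one}, which gives that $(D,\sigma(e_D))$ is indeed maximal for $\sigma(b)$ and that the two fusion systems $\mathcal{F}_{(D,e_D)}(G,b)$ and $\mathcal{F}_{(D,\sigma(e_D))}(G,\sigma(b))$ coincide; call this common fusion system $\mathcal{F}$. Thus $\psi=:\phi$ is an automorphism of $D$ inducing an automorphism of $\mathcal{F}$.

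Now invoke hypothesis (2): since $\phi:D\isoto D$ induces an automorphism of the fusion system $\mathcal{F}$, there is $g\in I_P$ with $\phi|_P=c_g$. The containment in $\BP_\OO(G\times G,\sigma(b)\tensor b^\ast)$ below the chosen maximal $\gamma$-Brauer pair then yields a $\gamma$-Brauer pair $(\Delta(\phi(P),\phi,P),\sigma(e_{\phi(P)})\tensor e_P^\ast)$; but since $\phi|_P=c_g$ with $g\in I_P=N_G(P,e_P)$, we have $\phi(P)={}^gP$ and $\sigma(e_{\phi(P)})=\sigma({}^ge_P)={}^g\sigma(e_P)$, so after conjugating by $(g,g)^{-1}\in G\times G$ — which, being a "diagonal" conjugation, fixes the maximal $\gamma$-Brauer pair up to replacing $\phi$ by $c_g^{-1}\phi$, an automorphism that is the identity on $P$ — we may assume $(\Delta(P),\sigma(e_P)\tensor e_P^\ast)$ is a $\gamma$-Brauer pair whose first component is the \emph{untwisted} diagonal $\Delta(P)$. (One should check the bookkeeping here carefully: what matters is that, possibly after such an adjustment, $\gamma$ has \emph{some} $\gamma$-Brauer pair of the form $(\Delta(P,\phi_0,P),\sigma(e_P)\tensor e_P^\ast)$ with $\phi_0|_P=\id_P$, equivalently with first component $\Delta(P)$.)

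With $(\Delta(P),\sigma(e_P)\tensor e_P^\ast)$ established as a $\gamma$-Brauer pair and $(P,e_P)$ self-centralizing, we are in the situation of \ref{noth:presKP}: $(P,\sigma(e_P))$ is self-centralizing by Lemma \ref{lem:two}, and there is a group isomorphism $\eta:I_P/C_G(P)\isoto I_P/C_G(P)$ with $\eta(hC_G(P))=gC_G(P)$ iff $(g,h)\in N_{G\times G}(\Delta(P),\sigma(e_P)\tensor e_P^\ast)$; in particular $(g,h)$ normalizing $\Delta(P)$ forces $gh^{-1}\in C_G(P)$, so $\eta=\id$, the induced map $\overline{\eta}^\ast$ on $H^2(I_P/PC_G(P),k^\times)$ is the identity, and hence $\kappa_{(P,\sigma(e_P))}=\overline{\eta}^\ast(\kappa_{(P,\sigma(e_P))})=\kappa_{(P,e_P)}$. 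On the other hand Lemma \ref{lem:three} gives $\kappa_{(P,\sigma(e_P))}=\kappa_{(P,e_P)}^p$, so $\kappa_{(P,e_P)}=\kappa_{(P,e_P)}^p$, contradicting hypothesis (1). Therefore no $p$-permutation equivalence between $\OO G\sigma(b)$ and $\OO Gb$ exists.

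**The main obstacle.** The delicate point is the normalization in the second paragraph: reducing an arbitrary $p$-permutation equivalence to one whose maximal $\gamma$-Brauer pair has first component lying over $P$ in the \emph{untwisted} way, so that the argument of \ref{noth:presKP} can be applied to conclude $\eta=\id$. This is exactly where hypothesis (2) does its work — it guarantees that the fusion-system automorphism $\phi$ coming out of an arbitrary $\gamma$ cannot "move" $P$ in a way that is invisible to $N_G(P,e_P)$, so that after diagonal conjugation the twist at $P$ can be absorbed. Making precise that "replace $\gamma$ by a $G\times G$-conjugate" interacts correctly with the Brauer-construction description of $\gamma$-Brauer pairs, and that diagonal conjugation by $(g,g)$ has the claimed effect on the twisting isomorphism, is the part of the proof requiring the most care; everything after the reduction is a verbatim repetition of the Theorem \ref{thm:two} argument.
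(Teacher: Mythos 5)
Your overall strategy is the same as the paper's: use hypothesis (2) to see that the twist $\phi$ of a suitably chosen maximal $\gamma$-Brauer pair restricts on $P$ to $c_g$ for some $g\in I_P$, produce from this the \emph{untwisted} $\gamma$-Brauer pair $(\Delta(P),\sigma(e_P)\tensor e_P^\ast)$, and then repeat the \ref{noth:presKP}/Lemma~\ref{lem:three} argument of Theorem~\ref{thm:two}. However, the one step you yourself flag as delicate is carried out incorrectly. Conjugation by $(g,g)^{-1}$ does not remove the twist: in general ${}^{(a,b)}\Delta(P,\psi,Q)=\Delta\bigl({}^aP,\,c_a\circ\psi\circ c_b^{-1},\,{}^bQ\bigr)$, so conjugating $\Delta(P,c_g,P)$ by $(g^{-1},g^{-1})$ gives $\Delta(P,\,c_g^{-1}c_gc_g,\,P)=\Delta(P,c_g,P)$ again --- the diagonal conjugation \emph{fixes} the twisted diagonal rather than straightening it. Similarly, its effect on the maximal pair is to replace $\phi$ by $c_g^{-1}\phi c_g$ (and $D$ by ${}^{g^{-1}}D$ when $g\notin N_G(D)$), not by $c_g^{-1}\phi$; and $(c_g^{-1}\phi c_g)|_P=c_g|_P$, which is not the identity in general. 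So, as written, the crucial pair $(\Delta(P),\sigma(e_P)\tensor e_P^\ast)$ is never actually obtained.

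The repair is short and is exactly what the paper does. Since $g\in I_P=N_G(P,e_P)$ one has $\phi(P)={}^gP=P$ and $e_{\phi(P)}=e_P$, so $(\Delta(P,\phi|_P,P),\sigma(e_P)\tensor e_P^\ast)\leq(\Delta(D,\phi,D),\sigma(e_D)\tensor e_D^\ast)$ is a $\gamma$-Brauer pair by the ideal property of $\BP_\OO(\gamma)$. Now conjugate this \emph{subpair} by the one-sided element $(1,g)$: since $\BP_\OO(\gamma)$ is stable under $G\times G$-conjugation and ${}^ge_P=e_P$, one gets ${}^{(1,g)}(\Delta(P,\phi|_P,P),\sigma(e_P)\tensor e_P^\ast)=(\Delta(P),\sigma(e_P)\tensor e_P^\ast)\in\BP_\OO(\gamma)$, after which your final paragraph goes through verbatim. (A minor stylistic point: there is no need to ``replace $\gamma$ by a $G\times G$-conjugate'' at the outset; $\gamma$ is unchanged by inner conjugation, and it suffices to conjugate the chosen maximal $\gamma$-Brauer pair, the set of $\gamma$-Brauer pairs being conjugation-stable with all maximal ones conjugate.)
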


\begin{proof}
	Suppose there exists a $p$-permutation equivalence $\gamma$ between $\OO G\sigma(b)$ and $\OO Gb$. By Lemma \ref{lem:one}, $(D,\sigma(e_D))$ is a maximal $\sigma(b)$-Brauer pair and $\mathcal{F}=\mathcal{F}_{(D,\sigma(e_D))}(G,\sigma(b))$. Since the maximal $\OO G$-Brauer pairs belonging to a fixed block are all $G$-conjugate, there exists a maximal $\gamma$-Brauer pair of the form $(\Delta(D,\phi,D),\sigma(e_D)\tensor e_D^\ast)$. By \cite[Theorem 11.2]{Boltje_2020}, the automorphism $\phi$ of $D$ induces an automorphism of the fusion system $\mathcal{F}$. By assumption (2) there exists an element $g\in I_P$ such that $\phi|_P=c_g$. In particular, $\phi(P)=P$ and $(\Delta(P,\phi,P),\sigma(e_P)\tensor e_P^\ast)$ is a $\gamma$-Brauer pair since it is contained in $(\Delta(D,\phi,D),\sigma(e_D)\tensor e_D^\ast)$. Now the set of $\gamma$-Brauer pairs is stable under conjugation, so
	\begin{equation*}
		{}^{(1,g)}(\Delta(P,\phi,P),\sigma(e_P)\tensor e_P^\ast)=(\Delta(P),\sigma(e_P)\tensor e_P^\ast)
	\end{equation*}
	is a $\gamma$-Brauer pair. Arguing as in the proof of Theorem \ref{thm:two}, the remarks of \ref{noth:presKP} imply that $\kappa_{(P,e_P)}=\kappa_{(P,\sigma(e_P))}$. By Lemma \ref{lem:three} we have $\kappa_{(P,\sigma(e_P))}=\kappa_{(P,e_P)}^p$ and therefore $\kappa_{(P,e_P)}=\kappa_{(P,e_P)}^p$, contradicting assumption (1). We conclude that $\OO G\sigma(b)$ and $\OO Gb$ are not $p$-permutation equivalent.
\end{proof}

\begin{example}\label{ex:one}
	We demonstrate a method for constructing blocks that satisfy the assumptions of Theorem \ref{thm:three}, which are therefore not $p$-permutation equivalent to their Galois conjugate. The blocks we will describe also satisfy the assumptions of Theorem \ref{thm:two}, and thus provide examples of blocks for which the isotypy of Theorem \ref{thm:one} does not lift to a $p$-permutation equivalence.
	
	Let $P$ be a $p$-group and let $A\subgp\Aut(P)$ be such that
	\begin{itemize}
		\item[(1)] $\Inn(P)\cap A=\set{1}$,
		\item[(2)] $O_p(A)=\set{1}$,
		\item[(3)] The image of $A$ in $\Out(P)$ is self-normalizing, and
		\item[(4)] There exists a cohomology class $\kappa\in H^2(A,k^\times)$ such that $\kappa\neq\kappa^p$.
	\end{itemize}
	For example, if $p=2$ and $P=E_{2^4}$ is the elementary abelian group of order $2^4$ then it is well-known that $\Aut(P)\iso GL_4(2)\iso A_8$, the alternating group of degree 8. Let $A$ be a subgroup of $\Aut(P)$ isomorphic to $A_7$. Properties (1), (2) and (3) above are easily verified for this choice of $P$ and $A$. Note that, since $p=2$, a cohomology class $\kappa\in H^2(A,k^\times)$ satisfies $\kappa\neq\kappa^p$ if and only if $\kappa$ is nontrivial. Now $H^2(A,k^\times)$ maps surjectively onto $\Hom(H_2(A,\Z),k^\times)$ and $H_2(A_7,\Z)\iso\Z/6\Z$ (cf. \cite[Exercises 6.1.5, Example 6.9.10]{Weibel_1994}). It follows that $H^2(A,k^\times)$ is nontrivial and property (4) holds. 
	
	
	Returning to the general case, let $P$ be a $p$-group and let $A$ be a group of automorphisms of $P$ satisfying properties (1) through (4) above. Fix a cohomology class $\kappa\in H^2(A,k^\times)$ such that $\kappa\neq\kappa^p$. Set $L=P\rtimes A$, let $S\in\Syl_p(A)$, $D=P\rtimes S\in\Syl_p(L)$, and set $\mathcal{F}=\mathcal{F}_D(L)$. Then $C_L(P)=Z(P)=C_D(P)$ by (1). Since $P$ is not $\mathcal{F}$-isomorphic to any other subgroup of $D$ it follows that $P$ is $\mathcal{F}$-centric. Now $P\nor L$ so it is clear that $P\nor\mathcal{F}$. Suppose $P\subgp Q\subgp D$ with $Q\nor\mathcal{F}$. Then any $\varphi\in\Aut_{\mathcal{F}}(P)$ extends to a morphism $\overline{\varphi}\in\Aut_{\mathcal{F}}(Q)$. This means that if $g\in L$ then there exists $h\in N_L(Q)$ such that ${}^gx={}^hx$ for all $x\in P$, i.e., $g^{-1}h\in Z(P)$. Therefore $N_L(Q)\cap gZ(P)$ is nonempty, and hence $gZ(P)\subseteq N_L(Q)$, for any $g\in L$, which implies $Q\nor L$. But then $Q/P$ is a normal $p$-subgroup of $L/P\iso A$, so property (2) forces $Q=P$. We have shown that $O_p(\mathcal{F})=P$.
	
	Let $\phi:D\isoto D$ be an automorphism that induces an automorphism of fusion systems $\phi:\mathcal{F}\isoto\mathcal{F}$. Then $\phi(P)=P$ since $P=O_p(\mathcal{F})$. Since $\phi$ defines an automorphism of $\mathcal{F}$ it follows that $\phi|_P$ normalizes $\Aut_{\mathcal{F}}(P)=\Inn(P)A$. Property (3) then implies $\phi|_P\in\Aut_{\mathcal{F}}(P)$.
	
	Now $\Out_{\mathcal{F}}(P)\iso A$, so $\kappa$ may be regarded as an element of $H^2(\Out_{\mathcal{F}}(P),k^\times)$. Then by Lemma \ref{lem:forexample} there exists a finite group $G$ containing $D$ and a block $b$ of $\OO G$ such that $(Q,b)$ is a $b$-Brauer pair for any $p$-subgroup $Q\subgp G$, $(D,b)$ is a maximal $b$-Brauer pair, $\mathcal{F}=\mathcal{F}_{(D,b)}(G,b)$ and the K\"{u}lshammer-Puig class at $(P,b)$ is $\kappa$ (we remark that the group $G$ can be taken to be a certain central extension of $L$ by a cyclic group of order $|L|_{p'}$, cf. \cite[Proposition IV.5.35]{Aschbacher_2011}). Since $P$ is $\mathcal{F}$-centric the Brauer pair $(P,b)$ is self-centralizing and by our choice of $\kappa$ we have $\kappa_{(P,b)}\neq\kappa_{(P,b)}^p$. Also, if $\phi$ is an automorphism of $D$ that induces an automorphism of $\mathcal{F}_{(D,b)}(G,b)$ then $\phi|_P$ is an $\mathcal{F}_{(D,b)}(G,b)$-automorphism of $P$ and hence there exists an element $g\in N_G(P,b)$ such that $\phi|_P=c_g$. Therefore the block $b$ satisfies the assumptions of Theorem \ref{thm:three}. In particular, $\OO G\sigma(b)$ and $\OO Gb$ are not $p$-permutation equivalent.
\end{example}


\bibliographystyle{plain}
\bibliography{../../../../Bibliography/bibliography}

\end{document}